\theoremstyle{plain}
\newtheorem{prop}{Proposition}
\newtheorem{lemma}{Lemma}
\newtheorem{cor}{Corollary}
\newtheorem{thm}{Theorem}
\theoremstyle{definition}
\newtheorem{defin}{Definition}
\theoremstyle{remark}
\newtheorem*{rem}{Remark}
\newcommand\blfootnote[1]{%
  \begingroup
  \renewcommand\thefootnote{}\footnote{#1}%
  \addtocounter{footnote}{-1}%
  \endgroup
}
\newcommand{\address}{{
\bigskip
\footnotesize
 \noindent \textsc{Faculty of Mathematics \\ University of Vienna \newline Oskar-Morgenstern-Platz 1, 1090 Vienna, Austria}\par\nopagebreak
  \noindent \textit{E-mail}: \texttt{christoph.harrach@univie.ac.at}
}}
\begin{document}
\title{Poisson transforms for differential forms}
\author{Christoph Harrach}

\newcommand{\mr}{\mathbb{R}}
\newcommand{\xa}{\mathfrak{a}}
\newcommand{\xg}{\mathfrak{g}}
\newcommand{\xk}{\mathfrak{k}}
\newcommand{\xm}{\mathfrak{m}}
\newcommand{\xn}{\mathfrak{n}}
\newcommand{\xp}{\mathfrak{p}}
\newcommand{\xq}{\mathfrak{q}}
\newcommand{\xu}{\mathfrak{u}}
\newcommand{\ce}{\mathcal{E}}
\maketitle

\begin{abstract}
We give a construction of a Poisson transform mapping density valued differential forms on generalized flag manifolds to differential forms on the corresponding Riemannian symmetric spaces, which can be described entirely in terms of finite dimensional representations of reductive Lie groups. Moreover, we will explicitly generate a family of degree-preserving Poisson transforms whose restriction to real valued differential forms has coclosed images. In addition, as a transform on sections of density bundles it can be related to the classical Poisson transform, proving that we produced a natural generalization of the classical theory.
\end{abstract}
\blfootnote{{\it 2010 Mathematics Subject Classification :} primary 53C65, secondary 22E46}
\blfootnote{{\it Key words and phrases:} Poisson transforms, Integral transform of differential forms, homogeneous spaces}
\blfootnote{supported by project P27072-N25 of the Austrian Science Fund (FWF)}
\section{Introduction}
In the field of harmonic analysis of Riemannian symmetric spaces $X = G/K$ one of the main tasks in the program proposed by Helgason \cite[II.4]{helgason_GGA} is the explicit determination of the decomposition of the space of smooth functions into joint eigenspaces of the ring of $G$-invariant differential operators. In this context, it was shown that every joint eigenfunction is obtained by the image of a smooth map on the F{\"u}rstenberg boundary $\partial X = K/M$ of $X$ under a $G$-equivariant map, called the Poisson transform. Naturally, it was generalized to a map between sections of natural vector bundles. In this way, it can be used to analyse principal series representations on the F{\"u}rstenberg boundary, which naturally carries the structure of a locally flat parabolic geometry. (cf. \cite{vanderven}, \cite{yang}).

In \cite{gaillard} Gaillard constructed a Poisson transform on the $(n+1)$-dimensional real hyperbolic space $H^{n+1}$ which maps differential forms of degree $k$ on $\partial H^{n+1}$ to differential forms on $H^{n+1}$ of the same degree. This was done in geometric terms on the Poincar{\'e} model following an idea of Thurston in \cite[chapter 11]{thurston}. Afterwards, it was shown that this operator can be expressed as an integral transformation whose kernel is an $\operatorname{SO}(n+1,1)$-invariant differential form on the product space $H^{n+1} \times \partial H^{n+1}$.

In this paper, we will give a more conceptual way to obtain Poisson transforms between differential forms following Gaillard's idea. Explicitly, in section \ref{sec_definition} we will define Poisson transforms between density valued differential forms on the generalized flag manifold $G/P$ and differential forms on the Riemannian symmetric space $G/K$ as an integral transformation, whose kernel is an invariant differential form on a homogeneous space. Such forms are completely determined by invariant vectors in finite dimensional representations of reductive Lie groups and thus can be computed explicitly. Subsequently, in section \ref{sec_construction_poisson_transform} we give an example of a family of such transforms preserving the degree of the differential forms by constructing  Poisson kernels in terms of the $|k|$-grading on the Lie algebra $\xg$. In addition, we will show that the images of these transforms on unweighted forms are coclosed by a simple computation on finite dimensional vector spaces. Finally, we prove that the Poisson transform on sections of density bundles equals the classical Poisson transform, showing that our construction is a natural generalization of the classical theory.

\section{Definition of the Poisson transform}\label{sec_definition}

\subsection{Gaillard's Poisson transform}
Throughout this paper let $\xg$ be a real semisimple Lie algebra and $\theta$ a Cartan involution on $\xg$ with associated Cartan decomposition $\xg = \xk \oplus \xq$. Let $\xa_0$ be a maximal abelian subalgebra of $\xq$ and $\Delta_r$ be the restricted roots of $\xg$. For a restricted root $\alpha$ we denote by $\xg_{\alpha}$ the associated restricted root space. We choose a positive subsystem $\Delta_r^+$ of the restricted roots and let $\xn_0$ denote the direct sum of all $\xg_{\alpha}$ with $\alpha \in \Delta_r^+$. Define the functional $\rho \in \xa_0^*$ via $\rho = \frac{1}{2} \sum_{\alpha \in \Delta^+_r} \dim(\xg_{\alpha})\alpha$. Moreover, let $\xm_0$ be the centralizer of $\xa_0$ in $\xk$. Denote by $\xp_0 = \xm_0 \oplus \xa_0 \oplus \xn_0$ the standard parabolic subalgebra of $\xg$ associated to the choice of positive Weyl chamber and let $\xp$ be a parabolic subalgebra of $\xg$ containing $\xp_0$. Let $G$ be a connected semisimple Lie group with finite center and Lie algebra $\xg$. Let $K \subset G$ be the maximal compact subgroup with Lie algebra $\mathfrak{k}$ and $P$ be a parabolic subgroup of $G$ with Lie algebra $\mathfrak{p}$. Moreover, let $A$ and $N$ be the subgroups of $G$ with Lie algebras $\xa_0$ and $\xn_0$ and denote by $g = k(g) \exp(H(g))n(g)$ the Iwasawa decomposition of $g \in G$, where $k(g) \in K$, $H(g) \in \xa_0$ and $n(g) \in N$.

By the Iwasawa decomposition, the subgroup $K$ acts transitively on $G/P$, which induces an isomorphism $K/M \cong G/P$ with $M := K \cap P$. Therefore, it follows that the $G$-action on the product manifold $G/K \times G/P$ is transitive, so we obtain an isomorphism
\begin{align*}
 \Psi &\colon G/M \to G/K \times G/P, & gM &\mapsto (gK, gP)
\end{align*}
of $G$-manifolds. The derivative of $\Psi$ induces a decomposition of $\xg/\xm$ into the direct sum of the $M$-representations $(\xg/\xm)^{1,0}$ and $(\xg/\xm)^{0,1}$ corresponding to $\xg/\xk$ respectively $\xg/\xp$. Thus, we obtain an induced Whitney sum decomposition of the tangent bundle $T(G/M)$ into the two $G$-invariant subbundles $T^{1,0} := G \times_M (\xg/\xm)^{1,0}$ and $T^{0,1} := G \times_M (\xg/\xm)^{0,1}$. We say that a differential form $\omega$ on $G/M$ has degree $(p,q)$ if it vanishes upon insertion of $(p+1)$ sections of $T^{1,0}$ and $(q+1)$ sections of $T^{0,1}$. In this case, we call $p$ the \emph{$K$-degree} and $q$ the \emph{$P$-degree} of $\omega$.

Let $\ce$ be an oriented homogeneous line bundle over $G/P$ and assume that $G$ acts by orientation preserving maps. Then there is a character $\chi \colon P \to \mr_+$ such that $p \cdot t = \chi(p)t$ for all $t \in \mr$. In this case, we denote the bundle by $\ce[\chi] = G \times_P \mr[\chi]$ and call it the \emph{bundle of $\chi$-densities}. Any section $\sigma \in \Gamma(\ce[\chi])$ corresponds to a smooth, $P$-equivariant map $f \colon G \to \mr[\chi]$. By compactness, the subgroup $M$ has to act trivially on $\mr[\chi]$, so we can view $f$ as an $M$-invariant map. We define the pullback $\pi_P^*\sigma \in C^{\infty}(G/M)$ of the $\chi$-density $\sigma$ as the smooth map $\pi_P^*\sigma(gM) := f(g)$ for all $g \in G$.

For any $\ce[\chi]$-valued differential form $\alpha$ on $G/P$ of degree $k$ we can form its pullback along the canonical projection $\pi_P \colon G/M \to G/P$, which is a differential form on $G/M$ of degree $(0,k)$. Thus, for any $\phi_{k, \ell} \in \Omega^{\ell,n-k}(G/M)$ with $n = \dim(G/P)$ the integral of the wedge product $\phi_{k,\ell} \wedge \pi_P^*\alpha$ over the compact fiber $G/P$ is a differential form $\Phi(\alpha)$ on $G/K$ of degree $\ell$. By definition, the induced map $\Phi$ will be $G$-equivariant if and only if $\phi_{k,\ell}$ is $G$-invariant.

\begin{defin} Let $\phi_{k,\ell}$ be a $G$-invariant differential form of degree $(\ell, n-k)$ on $G/M$, where $n = \dim(G/P)$. We call the induced $G$-equivariant operator
\begin{align*}
 \Phi \colon \Omega^k(G/P, \ce[\chi]) &\to \Omega^{\ell}(G/K), & \alpha &\mapsto \fint_{G/P} \phi_{k,\ell} \wedge \pi_P^*\alpha
\end{align*}
a \emph{Poisson transform} and the corresponding differential form $\phi_{k,\ell}$ its \emph{Poisson kernel}.
 \end{defin}

Therefore, Poisson transforms mapping density valued $k$-forms on $G/P$ to $\ell$-forms on $G/K$ correspond to $G$-invariant differential forms on $G/M$ of bidegree $(\ell, n-k)$. As invariant sections of a homogeneous vector bundle, these are in bijective correspondence with $M$-invariant elements in the finite dimensional $M$-representation $\Lambda^{\ell, n-k}(\xg/\xm)^*$, where $\xm$ denotes the Lie algebra of $M$, see \cite[Theroem 1.4.4]{cap_slovak}. 

\begin{prop}
 Poisson kernels on $G/M$ are in bijective correspondence with the ring of $M$-invariant elements in $\Lambda^* (\xg/\xm)^*$.
\end{prop}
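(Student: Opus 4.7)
The plan is to reduce the statement to the standard correspondence between $G$-invariant sections of a homogeneous vector bundle $G \times_M V$ over $G/M$ and $M$-invariant vectors in $V$, which is exactly Theorem 1.4.4 of \cite{cap_slovak} cited just above the proposition. The only real work is to identify the relevant bundle of forms with an associated bundle of an appropriate $M$-representation and to track how the bidegree decomposition fits together.

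First I would recall that, by construction, the tangent bundle of $G/M$ decomposes as the Whitney sum $T(G/M) = T^{1,0} \oplus T^{0,1}$ of the two $G$-invariant subbundles associated to the $M$-submodules $(\xg/\xm)^{1,0}$ and $(\xg/\xm)^{0,1}$ of $\xg/\xm$. Dualizing and taking exterior powers, the bundle of differential forms on $G/M$ of bidegree $(\ell, n-k)$ is the homogeneous vector bundle
\begin{equation*}
 G \times_M \bigl( \Lambda^{\ell}((\xg/\xm)^{1,0})^* \otimes \Lambda^{n-k}((\xg/\xm)^{0,1})^* \bigr),
\end{equation*}
which we denote by $\Lambda^{\ell,n-k}(\xg/\xm)^*$ on the representation side. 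By the cited theorem, smooth $G$-invariant sections of this bundle are in bijection with $M$-invariant elements of the fibre representation, yielding the desired bijection in each bidegree.

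Next I would assemble these bidegrees: summing over all $\ell$ and all $n-k$ gives that $G$-invariant differential forms of arbitrary bidegree on $G/M$ correspond bijectively to $M$-invariant elements in the full exterior algebra $\Lambda^*(\xg/\xm)^*$. Since every Poisson kernel is by definition a $G$-invariant form of some bidegree $(\ell,n-k)$, the union of these correspondences gives a bijection between the set of Poisson kernels and $(\Lambda^*(\xg/\xm)^*)^M$.

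Finally I would observe that both sides carry a natural ring structure and that the bijection is a ring isomorphism: on the differential-form side the product is the wedge product, on the representation side it is the exterior product of multilinear forms, and the isomorphism $\Gamma(G\times_M V)^G \cong V^M$ is evaluation at the identity coset, which is manifestly compatible with wedge products. I do not expect any serious obstacle here: the statement is essentially a repackaging of the associated-bundle correspondence together with the bidegree decomposition already introduced, and the only point requiring care is to verify that the wedge product on forms corresponds to the exterior multiplication on $\Lambda^*(\xg/\xm)^*$ under fibrewise evaluation.
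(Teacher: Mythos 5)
Your proposal is correct and follows essentially the same route as the paper: identify the bundle of $(\ell,n-k)$-forms on $G/M$ as the homogeneous bundle associated to $\Lambda^{\ell,n-k}(\xg/\xm)^*$ and invoke the correspondence between $G$-invariant sections and $M$-invariant elements of the fibre (Theorem 1.4.4 of \cite{cap_slovak}), then sum over bidegrees. The only difference is that you additionally spell out the compatibility with the ring structure via evaluation at the identity coset, which the paper leaves implicit.
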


As a result, we are able to construct Poisson transforms in a purely algebraic way via computations in finite dimensional $M$-representations. On the one hand, as the group $M$ is reductive, there is a wide variety of tools available to analyse $M$-modules quite efficiently. On the other hand, the ring of possible Poisson kernels depends strongly on the choices of $\xg$ and $\xp$ and thus cannot be uniformly determined in full generality. However, in the next section we will explicitly construct a Poisson transform $\Phi_k \colon \Omega^k(G/P, \ce[\chi]) \to \Omega^k(G/K)$ and show that on unweighted forms its image consists of coclosed forms. Furthermore, in the case $k = 0$ it reduces to the classical Poisson transform as in \cite[II.3.4]{helgason_GASS}.

\section{Construction of Poisson transforms}\label{sec_construction_poisson_transform}
In this section we will analyse the structure of the tangent bundle of $G/M$ and subsequently construct a Poisson transform mapping between differential forms, which exists for each choice of $G$ and $P$.

\subsection{Decomposition of the tangent bundle $T(G/M)$}
Recall \cite[chapter 3.2]{cap_slovak} that the choice of a parabolic subalgebra $\xp$ induces a $|k|$-grading 
\begin{align*}
 \xg = \xg_{-k} \oplus \dotso \oplus \xg_k
\end{align*}
on $\xg$ so that $\xp$ is the direct sum of the nonnegative grading components. We define $\xg_-$ and $\xp_+$ to be the direct sum of all negative and positive grading components, respectively. These are nilpotent subalgebras of $\xg$ due to the compatibility condition of the grading with the Lie bracket. Each grading component $\xg_i$ is by definition a $\xg_0$-module via the adjoint action. We denote by $E$ the grading element of $\xg$, i.e. the unique element in the center of $\xg_0$ which satisfies $[E, X] = iX$ for all $X \in \xg_i$. Moreover, the Killing form $B$ of $\xg$ induces a nondegenerate pairing of $\xg_{i}$ and $\xg_{-i}$, and the Cartan involution maps $\xg_i$ to $\xg_{-i}$ for all $i = 0, \dotsc, k$. In particular, the subalgebra $\xg_0$ of $\xg$ is $\theta$-stable and thus can be decomposed into the direct sum of $\xg_0 \cap \xq$ and  $\xg_0 \cap \xk$. Since $\xp_+$ is the direct sum of positive root spaces its intersection with $\xk$ is trivial, implying that $\xg_0 \cap \xk = \xm$.

\begin{prop}\label{prop_decomposition_tangent_bundle}
The tangent bundle of $T(G/M)$ decomposes into a Whitney sum of $G$-invariant subbundles
\begin{align*}
 T(G/M) \cong T^{-k}(G/M) \oplus \dotsb \oplus T^k(G/M).
\end{align*}
For $i \neq 0$, the rank of the bundle $T^i(G/M)$ equals the dimension of the $i$-th grading component, whereas the rank of $T^0(G/M)$ equals $d := \dim(G/K) - \dim(G/P)$. Moreover, the bundles $T^{i}(G/M)$ and $T^{-i}(G/M)$ are canonically isomorphic via the map $\Theta_M$ induced by the Cartan involution for all $i = 0, \dotsc, k$.

In this picture, the invariant subbundles $T^{1,0}$ and $T^{0,1}$ are given by
 \begin{align*}
 T^{1,0} &= \bigoplus_{i=0}^k T^i(G/M),  &  T^{0,1} &= \bigoplus_{i=1}^k S_{\Theta_M}(T^{-i}(G/M)),
 \end{align*}
where $S_{\Theta_M}(T^{-i}(G/M)) = \{X + \Theta_M(X) \mid X \in T^{-i}(G/M)\}$ denotes the symmetrization with $\Theta_M$ for all $i = 1, \dotsc, k$.
\end{prop}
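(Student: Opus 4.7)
The plan is to reduce all the assertions to statements about $M$-submodules of $\xg/\xm$, using the standard bijection between $G$-invariant subbundles of $T(G/M) = G \times_M (\xg/\xm)$ and $M$-invariant subspaces of the fiber. The heavy lifting is then done by the $|k|$-grading together with its compatibility with the Cartan involution.

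The first step is to exploit the inclusion $\xm \subset \xg_0$ to write
\[
\xg/\xm = (\xg_0/\xm) \oplus \bigoplus_{i \neq 0} \xg_i
\]
as an $M$-module decomposition; each $\xg_i$ is $\xg_0$-stable, hence stable under the adjoint action of $M$, which lies in the Levi subgroup with Lie algebra $\xg_0$ (since $K \cap P_+ = \{e\}$ by compactness of $K$ and unipotence of $P_+$). Setting $T^i = G \times_M \xg_i$ for $i \neq 0$ and $T^0 = G \times_M (\xg_0/\xm)$ yields the Whitney sum decomposition and the rank formulas for $i \neq 0$ immediately. For $T^0$, the $\theta$-stable reductive subalgebra $\xg_0$ admits the Cartan decomposition $\xg_0 = \xm \oplus (\xg_0 \cap \xq)$, and one checks $\dim(\xg_0 \cap \xq) = d$ by decomposing $\xq$ via the $\theta$-swap $\xg_i \leftrightarrow \xg_{-i}$: writing $\xq \cap (\xg_i \oplus \xg_{-i}) = \{X - \theta X : X \in \xg_i\}$ for $i > 0$ and summing dimensions against $\dim \xg_- = \dim \xg_+$ gives the result.

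For the isomorphism $T^i \cong T^{-i}$, the Cartan involution $\theta$ fixes $\xm$ pointwise and integrates to an automorphism of $G$ with fixed group $K \supset M$; hence it descends to an $M$-equivariant involution of $\xg/\xm$ which swaps $\xg_i$ and $\xg_{-i}$, inducing the bundle map $\Theta_M$ with the claimed properties (with the $i = 0$ case becoming multiplication by $-1$ on the quotient $\xg_0/\xm \cong \xg_0 \cap \xq$).

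The identification of $T^{1,0}$ and $T^{0,1}$ hinges on recognizing that the $M$-module isomorphism $d\Psi \colon \xg/\xm \to \xg/\xk \oplus \xg/\xp$ is simply the product of the natural projections, so $(\xg/\xm)^{1,0} = \ker(\pi_P) = \xp/\xm$ while $(\xg/\xm)^{0,1} = \ker(\pi_K) = \xk/\xm$. The first coincides with $\bigoplus_{i=0}^k T^i$ directly from $\xp = \xg_0 \oplus \bigoplus_{i>0} \xg_i$; for the second, applying the Cartan decomposition inside each pair $\xg_i \oplus \xg_{-i}$ gives $\xk = \xm \oplus \bigoplus_{i>0} \{X + \theta X : X \in \xg_{-i}\}$, which matches $\bigoplus_{i=1}^k S_{\Theta_M}(T^{-i})$ via the $M$-equivariant isomorphism $\xg_{-i} \to \{X + \theta X : X \in \xg_{-i}\}$, $X \mapsto X + \theta X$. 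The main subtlety, in my view, lies not in any single computation but in tracking the correct $M$-invariant complement: the naive choice $\bigoplus_{i>0} \xg_{-i}$ is also an $M$-complement of $\xp/\xm$ in $\xg/\xm$, but only the symmetrized version arises canonically as $\ker(\pi_K)$, and this is what forces the appearance of $S_{\Theta_M}$ in the statement.
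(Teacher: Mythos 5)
Your proof is correct and follows essentially the same route as the paper: decompose $\xg/\xm$ into the images of the grading components, identify $(\xg/\xm)^{1,0}=\xp/\xm$ and $(\xg/\xm)^{0,1}=\xk/\xm$ via $d\Psi$, describe $\xk$ through the $\theta$-symmetrizations $S_\theta(\xg_{-i})$, and pass to associated bundles. The only differences are cosmetic: you make the dimension count $\dim(\xg_0\cap\xq)=d$ explicit (the paper leaves it implicit), while your one-line justification that $M$ preserves the grading (via $K\cap P_+=\{e\}$) is a little thin, though the fact itself is standard and is also taken for granted in the paper.
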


\begin{proof}
The image of the $M$-module $\xg_i$ under the $M$-equivariant projection $\xg \to \xg/\xm$ is again an $M$-representation $(\xg/\xm)_i$ for all $i = -k, \dotsc, k$. Therefore, we get a decomposition 
\begin{align*}
 \xg/\xm  = (\xg/\xm)_{-k} \oplus \dotso \oplus (\xg/\xm)_{k}
\end{align*}
into $M$-modules. Since $\xm$ is contained in $\xg_0$ we deduce that every component $(\xg/\xm)_i$ is isomorphic to $\xg_i$ for $i \neq 0$, whereas $(\xg/\xm)_0$ is isomorphic to $\xg_0 \cap \xq$. Moreover, the Lie algebra $\xm$ is $\theta$-stable, so the Cartan involution factorizes to a map $\theta_{\xm} \colon \xg/\xm \to \xg/\xm$ which maps $(\xg/\xm)_{-i}$ to $(\xg/\xm)_{i}$ for all $i$.

By definition of the isomorphism $\Psi$ we see that $(\xg/\xm)^{1,0}$ equals $\xp/\xm$, whereas the subrepresentation $(\xg/\xm)^{0,1}$ coincides with $\xk/\xm$. Since the subalgebra $\xp$ is given by the sum of all nonnegative grading components we immediately obtain that $\xp/\xm = \bigoplus_{i=0}^k (\xg/\xm)_i$. On the other hand, we can determine $\xk$ as the $+1$-eigenspace of the Cartan involution. Since $\theta$ maps $\xg_i$ to $\xg_{-i}$ we obtain that the algebra $\xk$ is given by the sum of $\xm$ together with the $\theta$-symmetrizations $S_\theta(\xg_{-i}) = \{X + \theta(X) \mid X \in \xg_{-i}\}$ for all $i > 0$. Moreover, $\xm$ is $\theta$-stable, so we see that the projection of $S_{\theta}(\xg_{-i})$ to $\xg/\xm$ coincides with the symmetrization $S_{\theta_{\xm}}((\xg/\xm)_{-i})$, which is defined analogously. 

For the global picture, recall that the tangent bundle $T(G/M)$ is the homogeneous vector bundle associated to $\xg/\xm$. Each of the subspaces $(\xg/\xm)_i$ is $M$-invariant, so they give rise to $G$-invariant subbundles $T^i(G/M) \subset T(G/M)$ via the associated bundle construction for $i = -k, \dotsc, k$. Moreover, the Cartan involution $\theta_{\xm}$ induces a $G$-equivariant bundle map $\Theta_M \colon T(G/M) \to T(G/M)$, which restricts to a canoncial isomorphism $T^{i}(G/M) \cong T^{-i}(G/M)$ of homogeneous vector bundles for all $i$. Finally, note that the spaces $S_{\theta_{\xm}}((\xg/\xm)_{-i})$ are again $M$-modules, and their induced bundles coincide with $S_{\Theta_M}(T^{-i}(G/M))$ by naturality of the associated bundle construction.
\end{proof}

\subsection{A Poisson transform between differential forms}
The last part of this paper is dedicated to the construction of a familiy of Poisson transforms $\Phi_k$ for all $k = 0, \dotsc, n$ mapping $k$-forms on $G/P$ to differential forms on $G/K$ of the same degree. For that, we will construct an invariant $(1,0)$-form on the $M$-representation $(\xg/\xm)_0$, which will serve as a building block for Poisson kernels of higher degree. In order to obtain induced invariant forms we first relate the exterior derivatives on $G/K$ and $G/P$ as well as the Riemannian codifferential to Poisson transforms.

The bigrading of the space of differential forms on $G/M$ induces a natural splitting of the exterior derivative into the sum of partial derivatives $d = d_K + d_P$, where the first operator raises the $K$-degree and the second operator raises the $P$-degree. We call these operators the \emph{$K$-derivative} and the \emph{$P$-derivative} respectively. Since $d$ is a differential, so are both partial derivatives and in addition the relation $d_Kd_P = -d_Pd_K$ holds.

Next, we consider the Hodge star operator $\ast$ on $p$-forms on $G/K$, which is induced by a $K$-equivariant map $\Lambda^p(\xg/\xk)^* \to \Lambda^{n+d-p}(\xg/\xk)^*$. After applying the $M$-invariant isomorphism $\xg/\xk \cong (\xg/\xm)^{1,0}$ on both sides we can form the tensor product with the $q$-th exterior power of $(\xg/\xm)^{0,1*}$, obtaining an $M$-equivariant map $\ast_K \colon \Lambda^{p,q}(\xg/\xm)^* \to \Lambda^{n+d-p, q}(\xg/\xm)^*$. We call the induced map
\begin{align*}
 \ast_K \colon \Omega^{p,q}(G/M) \to \Omega^{n+d-p,q}(G/M)
\end{align*}
the \emph{$K$-Hodge star}. Using this, we define the \emph{$K$-codifferential} on forms on $G/M$ by the formula $\delta_K = (-1)^{(n+d)(p-1)+1} \ast_K d_K \ast_K$, which coincides with the pullback of the Riemannian codifferential $\delta$ on $G/K$ along the canonical projection.

By naturality, all the above maps are $G$-equivariant and therefore have $M$-equivariant counterparts on the level of the corresponding $M$-representations, which we will denote by the same symbols. 

\begin{prop}\label{prop_Poisson_operators}
Let $\Phi \colon \Omega^k(G/P) \to \Omega^{\ell}(G/K)$ be a Poisson transform with corresponding Poisson kernel $\phi$ of degree $(\ell, n-k)$. Then $d \circ \Phi$, $\delta \circ \Phi$ and $\Phi \circ d$ are also Poisson transforms with kernels $d_K\phi$, $\delta_K\phi$ and $(-1)^{n-k+\ell+1} d_p\phi$ respectively.
\end{prop}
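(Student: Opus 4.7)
The approach is to push the operators $d$ and $\ast$ on $G/K$ inside the defining fiber integral $\Phi(\alpha) = \fint_{G/P}\phi \wedge \pi_P^*\alpha$ and re-express them in terms of the bigraded operators on $G/M$. I would first collect three preparatory facts: (i) since $d = d_K + d_P$ is a graded derivation whose summands have pure bidegree shifts, $d_K$ and $d_P$ are themselves graded derivations of $\Omega^{*,*}(G/M)$; (ii) the vertical distribution of $\pi_P \colon G/M \to G/P$ coincides with $T^{1,0}$, so $\pi_P^*\alpha$ has $K$-degree zero, and combining $d \pi_P^* = \pi_P^* d$ with the bidegree decomposition forces $d_K \pi_P^*\alpha = 0$ and $d_P \pi_P^*\alpha = \pi_P^* d\alpha$; (iii) fiber integration $\fint_{G/P} \colon \Omega^{p,n}(G/M) \to \Omega^p(G/K)$ commutes with $d$ (the fibers are compact without boundary), annihilates every bidegree other than $(*,n)$, and further commutes with $d_K$ and with $\ast_K$ since both operators act transversally to the fiber.

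For $d \circ \Phi$, I would commute $d$ past the integral via (iii), expand $d(\phi \wedge \pi_P^*\alpha)$ using the derivation rule together with $d = d_K + d_P$, and discard the vanishing pieces: the term containing $d_K \pi_P^*\alpha$ by (ii), and the two terms of $P$-degree $n+1$ by (iii). The only survivor is $\fint d_K\phi \wedge \pi_P^*\alpha$, which identifies the kernel of $d \circ \Phi$ as $d_K\phi$.

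For $\Phi \circ d$, I would use (ii) to rewrite $\pi_P^* d\alpha = d_P \pi_P^*\alpha$, then exploit that $\phi \wedge \pi_P^*\alpha$ already carries the maximal $P$-degree $n$, so $d_P(\phi \wedge \pi_P^*\alpha) = 0$. Expanding this zero by the derivation rule in (i) yields $d_P\phi \wedge \pi_P^*\alpha + (-1)^{\ell + n - k}\phi \wedge d_P \pi_P^*\alpha = 0$, which rearranges to give the kernel $(-1)^{n - k + \ell + 1} d_P\phi$ with exactly the advertised sign. For $\delta \circ \Phi$, I would first note $\ast \Phi(\alpha) = \fint \ast_K\phi \wedge \pi_P^*\alpha$ by (iii), then apply the $d \circ \Phi$ computation with the kernel $\ast_K\phi$, apply $\ast$ once more, and multiply by $(-1)^{(n+d)(\ell - 1) + 1}$ coming from the definition of $\delta$; this converts $\ast_K d_K \ast_K\phi$ into $\delta_K\phi$ and yields the claim.

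The main obstacle is the commutation assertion in (iii) that fiber integration interchanges with $\ast_K$ and with $d_K$. These are the only genuinely geometric steps, as opposed to bidegree bookkeeping: both rest on the $G$-invariant Whitney-sum splitting $T(G/M) = T^{1,0} \oplus T^{0,1}$ from Proposition \ref{prop_decomposition_tangent_bundle}, which separates the horizontal and vertical directions of $\pi_K \colon G/M \to G/K$ cleanly enough that pointwise operations in the $T^{1,0}$-factor can be pulled through the integration over the $T^{0,1}$-fiber. Once this commutativity is granted, everything else reduces to careful sign and bidegree tracking.
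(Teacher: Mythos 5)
Your treatments of $d \circ \Phi$ and $\delta \circ \Phi$ are fine and essentially reproduce the paper's argument; also, the commutation facts you single out as the "main obstacle" are not really an issue ($\ast \circ \fint = \fint \circ \ast_K$ is immediate from the definition of $\ast_K$, which acts only on the $(1,0)$-factor, and $\fint \circ d_K = d \circ \fint$ follows from $\fint \circ d = d \circ \fint$ by bidegree bookkeeping). The genuine problem is in your $\Phi \circ d$ step. There $\alpha$ is a $(k-1)$-form on $G/P$, so $\pi_P^*\alpha$ has $P$-degree $k-1$ and $\phi \wedge \pi_P^*\alpha$ has bidegree $(\ell, n-1)$ --- \emph{not} the maximal $P$-degree $n$ as you claim. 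Hence $d_P(\phi \wedge \pi_P^*\alpha)$ is a form of bidegree $(\ell, n)$ with no pointwise reason to vanish, and your identity $d_P\phi \wedge \pi_P^*\alpha + (-1)^{\ell+n-k}\phi \wedge d_P\pi_P^*\alpha = 0$ is false as an identity of forms on $G/M$; the degree count that was supposed to justify it is simply wrong.

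What is true, and what you actually need, is that this expression vanishes \emph{after} fiber integration, and that requires the commutation with $d$ you listed in (iii) but did not invoke here: since $\phi \wedge \pi_P^*\alpha$ has $P$-degree $n-1 < n$, its fiber integral is zero, so $\fint d(\phi \wedge \pi_P^*\alpha) = d \fint (\phi \wedge \pi_P^*\alpha) = 0$; splitting $d = d_K + d_P$, the $d_K$-term still has $P$-degree $n-1$ and integrates to zero, whence $\fint d_P(\phi \wedge \pi_P^*\alpha) = 0$. With this replacement your rearrangement and the sign $(-1)^{n-k+\ell+1}$ go through unchanged, and the repaired argument is exactly the paper's: it expands $\phi \wedge d\pi_P^*\alpha = (-1)^{|\phi|}\bigl( d(\phi \wedge \pi_P^*\alpha) - (d\phi) \wedge \pi_P^*\alpha \bigr)$ and kills the first term by precisely this fiber-Stokes reasoning, then discards the $d_K$-part of the second term for degree reasons.
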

\begin{proof}
  Let $\alpha$ be a $k$-form on $G/P$. Since the exterior derivative commutes with the fiber integral (\cite[ch. VII, Prop. X]{greub_halperin_vanstone}) we obtain that
  \begin{align*}
   d\Phi(\alpha) = \fint_{G/P}\!d(\phi \wedge \pi_P^* \alpha) = \fint_{G/P}\!(d\phi) \wedge \pi_P^*\alpha + (-1)^{n-k+\ell} \phi \wedge (d\pi_P^*\alpha).
  \end{align*}
Using naturality of the exterior derivative in the second summand we see that the integrand $\phi \wedge \pi_P^*(d\alpha)$ is a form on $G/M$ of bidegree $(\ell, n+1)$ and hence vanishes. For the first summand, we split $d$ into partial derivatives, where similarly the $d_P$-part is trivial due to the $P$-degree of the integrand. All in all we get
\begin{align*}
 d\Phi(\alpha) = \fint_{G/P}\! (d_K\phi) \wedge \pi_P^*\alpha.
\end{align*}
The definition of the $K$-Hodge star implies that $\ast \circ \fint = \fint \circ \ast_K$, so we immediately see that $\delta \circ \Phi$ is a Poisson transform with kernel $\delta_K\phi$.

Finally, let $\alpha$ be a $(k-1)$-form on $G/P$. Naturality and the antiderivation property of the exterior derivative imply that
\begin{align*}
 \Phi(d\alpha) = \fint_{G/P}\! \phi\wedge d\pi_P^*\alpha = \fint_{G/P}(-1)^{|\phi|} \left( d(\phi \wedge \pi_P^*\alpha) - (d\phi) \wedge \pi_P^*\alpha\right).
\end{align*}
Commuting the exterior derivative and the fiber integral in the first summand, the resulting integrand is a form on $G/M$ of degree $(\ell, n-1)$, whose fiber integral is trivial. For the other summand, we split $d$ into partial derivatives, and the summand containing the $K$-derivative is again trivial due to dimensional reasons.
\end{proof}

In order to keep track of the bidegree of forms on $G/M$, we will introduce representatives of the $M$-subrepresentations $(\xg/\xm)^{1,0}$ and $(\xg/\xm)^{0,1}$, compare with Proposition \ref{prop_decomposition_tangent_bundle}. For all $X \in \xp_+$ we denote by $F_X$ the vector $X + \xm \in (\xg/\xm)^{1,0}$. Similarly, for all $X \in \xg_-$ we define $G_X \in (\xg/\xm)^{0,1}$ via $G_X := S_{\theta_{\xm}}(X)+\xm$. 

\begin{prop}\label{prop_invariant_form_general}
 The grading element on $\xg$ induces a nontrivial $M$-invariant element $E^* \in (\xg/\xm)^*$ of degree $(1,0)$. Moreover, its $K$-derivative is trivial, whereas its $P$-derivative is given by
 \begin{align*}
  d_PE^*(F_X, G_Y) = iB(E,E)^{-1}B(X,Y)
 \end{align*}
for all $i > 0$, $X \in \xg_i$ and $Y \in \xg_{-i}$. In particular, its restriction to the subspace $(\xg/\xm)_{-i} \times (\xg/\xm)_i$ is nondegenerate and $M$-invariant.
\end{prop}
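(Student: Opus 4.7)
The plan is to define $E^* \in \xg^*$ explicitly from the Killing form, verify that it descends to an $M$-invariant element of $(\xg/\xm)^*$ of the correct bidegree, and then compute both partial derivatives via the Chevalley--Eilenberg formula for invariant forms on $G/M$. First I would set $E^*(X) := B(E,E)^{-1} B(E, X)$. Since $E$ is the unique central element of $\xg_0$ implementing the grading and $\xg_0$ is $\theta$-stable, a short bracket computation shows $\theta(E) + E$ is central in $\xg$, hence $\theta(E) = -E$ by semisimplicity, so $E$ lies in $\xg_0 \cap \xq$. Combined with $\xm \subset \xk$ and the $B$-orthogonality of $\xk$ and $\xq$, this yields $E^*|_{\xm} = 0$, so $E^*$ descends to $(\xg/\xm)^*$. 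Nontriviality follows from $E^*(E) = 1$, and $M$-invariance follows because $E$ is central in $\xg_0$ and $M$ lies in the Levi subgroup, so $\mathrm{Ad}(m)E = E$.

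Next I would verify the bidegree. By invariance of $B$, one has $B(E, \xg_j) = 0$ for all $j \neq 0$, so $E^*$ vanishes on any lift lying entirely in nonzero grading components. Applied to the representatives $G_Y = Y + \theta(Y) + \xm$ of $(\xg/\xm)^{0,1}$ with $Y \in \xg_-$ and $\theta(Y) \in \xg_+$, this gives $E^*(G_Y) = 0$; together with $E^*(F_E) = 1$ on the $(1,0)$ side, the bidegree of $E^*$ is exactly $(1,0)$.

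For the derivatives I would use the formula $dE^*(X, Y) = -E^*(\overline{[X, Y]})$ valid on any lifts $X, Y \in \xg$, which is well-defined by $M$-invariance. For $d_K E^*$ one picks lifts $X, Y \in \xp$: decomposing $X = X_0 + X_+$, $Y = Y_0 + Y_+$ with subscripts indicating the $\xg_0$- and $\xp_+$-parts, the $\xg_0$-component of $[X, Y]$ equals $[X_0, Y_0]$, and centrality of $E$ in $\xg_0$ forces $B(E, [X_0, Y_0]) = B([E, X_0], Y_0) = 0$. Hence $d_K E^* = 0$. For $d_P E^*$ on $(F_X, G_Y)$ with $X \in \xg_i$ and $Y \in \xg_{-i}$, the only grading-zero contribution in $[X, Y + \theta(Y)] = [X, Y] + [X, \theta(Y)]$ comes from $[X, Y]$ (since $[X, \theta(Y)] \in \xg_{2i}$), and invariance of $B$ gives $B(E, [X, Y]) = B([E, X], Y) = i B(X, Y)$, yielding the stated formula up to the sign convention used in the Chevalley--Eilenberg differential. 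Nondegeneracy on $(\xg/\xm)_{-i} \times (\xg/\xm)_i$ is then immediate from nondegeneracy of the Killing pairing $\xg_i \times \xg_{-i} \to \mr$, and $M$-invariance of $d_P E^*$ is inherited from $E^*$. The main obstacle is not conceptual but bookkeeping: fixing the correct lifts for the bigraded representatives, tracking signs through the Chevalley--Eilenberg differential, and making sure each bracket actually lands in the grading component that $E^*$ sees.
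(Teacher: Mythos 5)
Your proposal is correct and takes essentially the same route as the paper: identify the grading element as an element of $\mathfrak{g}_0 \cap \mathfrak{q}$ via $\theta(E) = -E$, take $E^*$ to be its (normalized) Killing-form dual, and evaluate $dE^*(\overline{X},\overline{Y}) = -E^*(\overline{[X,Y]})$ using the grading compatibility of the bracket and invariance of $B$ to isolate the only contributing pairings and obtain nondegeneracy. Your explicit formula $E^*(\cdot) = B(E,E)^{-1}B(E,\cdot)$ merely streamlines the paper's dual-basis computation in $\mathfrak{g}_0 \cap \mathfrak{q}$, and your ``up to sign'' caveat is harmless, as the only sign in play is the same Chevalley--Eilenberg convention the paper's own proof uses.
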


\begin{proof}
 The grading element $E$ of $\xg$ is contained in the center of $\xg_0$, so in particular it commutes with the subalgebra $\xm$. Moreover, for all $X \in \xg_i$ we have
\begin{align*}
 [\theta(E), X] = \theta([E, \theta(X)]) = -iX,
\end{align*}
which implies that $\theta(E) = -E$ and therefore $E \in \xq$. Since $\xg_0$ is $\theta$-stable, this shows that $E$ is contained in the subalgebra $\xg_0 \cap \xq$, which is complementary to $\xm$. Therefore, we can project the grading element to a distinguished element in $(\xg/\xm)_0$, which we will denote by the same symbol. Since the grading element is in the centralizer of $\xm$, its dual $E^* \in (\xg/\xm)^*$ is $M$-invariant and by construction of degree $(1,0)$. 

By definition, the image of $E^*$ under the $M$-equivariant map corresponding to the exterior derivative on $G/M$ can be computed via
\begin{align*}
 dE^*(X + \xm, Y + \xm) = -E^*([X,Y] + \xm)
\end{align*}
for all $X$, $Y \in \xg$, which is well defined due to $M$-invariance of $E^*$. Moreover, the compatibility of the Lie bracket on $\xg$ with the $|k|$-grading shows that the derivative is trivial unless $X \in \xg_{-i}$ and $Y \in \xg_i$ for all $i = -k, \dotsc, k$. Therefore, we have to determine explicitly the projections of the Lie brackets to the subspace generated by the grading element.

For two vectors $X$ and $Y$ in $\xg_0 \cap \xq$ the Lie bracket $[X,Y]$ is in $\xg_0 \cap \xk = \xm$, which implies that $dE^*$ is trivial on $\xg_0$. For the other components, recall that the decomposition of $\xg_0$ into the direct sum $(\xg_0 \cap \xq) \oplus \xm$ is orthogonal with respect to $B$ and the restriction of the Killing form to $\xg_0 \cap \xq$ is positive definite. For $X \in \xg_{-i}$ we choose $Y \in \xg_i$ with $B(X, Y) \neq 0$. By the invariance property of the Killing form we obtain that
\begin{align*}
 B(E, [X, Y]) = B([E, X], Y) = -iB(X,Y) \neq 0.
\end{align*}
We can extend $E = E_1$ to an orthogonal basis $\{E_1, E_2, \dotsc, E_\ell\}$ of $\xg_0 \cap \xq$. Then the dual basis with respect to the Killing form is given by $\{\lambda_1E_1, \dotsc, \lambda_\ell E_\ell\}$, where $\lambda_i = B(E_i,E_i)^{-1} \neq 0$, and every vector $Z \in \xg_0 \cap \xq$ can be expressed as the sum $Z = \sum_{i=1}^\ell \lambda_i B(E_i, Z)E_i$. Therefore, for $X \in \xg_{-i}$ and $Y \in \xg_i$ we obtain from above that
\begin{align*}
 dE^*(X + \xm,Y + \xm) = -E^*([X,Y] + \xm) = i\lambda_1B(X,Y) \neq 0.
\end{align*}
This shows that the restriction of $dE^*$ to $(\xg/\xm)_{-i} \times (\xg/\xm)_{i}$ is nondegenerate for all $1 \le i \le k$. Moreover, the derivative is trivial on $(\xg/\xm)^{1,0}$, hence the $K$-derivative of $E^*$ is trivial.
\end{proof}

The $M$-invariant $(1,0)$-form constructed in the last Proposition can be used to produce Poisson kernels of higher degree by applying differential operators and forming wedge products. In order to define a Poisson transform between differential forms preserving the degrees, we need to make one more observation.

\begin{lemma}\label{lem_invariant_form_exterior}
 Assume that $M$ acts trivially on the $1$-dimensional representation $\Lambda^d (\xg/\xm)_0^*$. Then the $K$-derivative of every element $\tau$ in this representation is trivial.
\end{lemma}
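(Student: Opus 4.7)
The plan is to evaluate $d_K\tau$ directly at the level of $M$-modules via the Chevalley–Eilenberg formula and conclude by a combination of the bigrading and a dimension count.

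First, by the inclusion $(\xg/\xm)_0 \hookrightarrow (\xg/\xm)^{1,0}$ coming from Proposition \ref{prop_decomposition_tangent_bundle}, any $\tau \in \Lambda^d(\xg/\xm)_0^*$ extends to a form of bidegree $(d,0)$ on $\xg/\xm$, so $d_K\tau$ is of bidegree $(d+1,0)$. Since $\tau$ is $M$-invariant by hypothesis, the Chevalley–Eilenberg formula for invariant forms gives
\begin{align*}
 d\tau(Y_0,\dotsc,Y_d) = \sum_{i<j}(-1)^{i+j+1}\tau([Y_i,Y_j]+\xm,\, Y_0,\dotsc,\hat{Y}_i,\dotsc,\hat{Y}_j,\dotsc,Y_d)
\end{align*}
at the $M$-module level, which is well-defined thanks to the $M$-invariance of $\tau$.

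Next I would evaluate $d_K\tau$ by taking arguments $Y_0,\dotsc,Y_d \in (\xg/\xm)^{1,0} = \bigoplus_{i\ge 0}(\xg/\xm)_i$ and assuming, by multilinearity, that each $Y_k$ is homogeneous of grade $j_k \ge 0$. Since $\tau$ vanishes whenever any of its arguments leaves $(\xg/\xm)_0$, a given summand survives only if every retained argument has grade zero and, moreover, $[Y_i,Y_j]$ reduces into $\xg_0$ modulo $\xm$; by compatibility of the $|k|$-grading with the bracket, the latter forces $j_i+j_j=0$, hence $j_i = j_j = 0$ as well.

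Therefore every potentially nonzero summand restricts all $d+1$ arguments to the $d$-dimensional space $(\xg/\xm)_0$, on which any alternating $(d+1)$-form vanishes identically. A clean complementary argument is available as well: for $Y_i,Y_j \in \xg_0 \cap \xq$ one has $[Y_i,Y_j] \in [\xq,\xq] \cap \xg_0 \subseteq \xk \cap \xg_0 = \xm$, so $[Y_i,Y_j]+\xm = 0$ and each summand in the Chevalley–Eilenberg expansion vanishes term by term. Either route yields $d_K\tau = 0$; the only real work is careful bookkeeping of the $|k|$-grading together with the bidegree $(p,q)$-decomposition, and I do not foresee any deeper obstacle.
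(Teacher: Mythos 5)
Your proof is correct and essentially coincides with the paper's: the paper likewise reduces $d_K\tau$ to insertions of $[X,Y]+\xm$ with $X,Y\in\xp$, uses compatibility of the $|k|$-grading with the bracket to force $X,Y\in\xg_0\cap\xq$, and concludes from $[\xg_0\cap\xq,\xg_0\cap\xq]\subseteq\xg_0\cap\xk=\xm$ that every term vanishes, which is exactly your ``complementary argument.'' Your dimension-count ending is a harmless redundant alternative, and the sign convention in your Chevalley--Eilenberg formula is immaterial to the vanishing.
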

\begin{proof}
 In order to compute the $K$-derivative of $\tau$ we have to insert vectors of the form $[X,Y] + \xm$ with $X$, $Y \in \xp$ into $\tau$. However, due to the properties of the $|k|$-grading we see that this vector is in $(\xg/\xm)_0$ if and only if $X$ and $Y$ are in $\xg_0 \cap \xq$, in which case $[X,Y] \in \xm$.
\end{proof}
In particular, note that the assumption of the Lemma is satisfied if $M$ is connected or if $P$ is the minimal parabolic, in which case $M$ acts trivially on $(\xg/\xm)_0 \cong \xa_0$.

\begin{thm}\label{thm_poisson_transform}
Let $G$ be a semisimple Lie group with finite center, $K \subset G$ a maximal compact subgroup, $P$ a parabolic subgroup of $G$ and $M := K \cap P$. Let $\xg$ and $\xm$ be the Lie algebras of $G$ respectively $M$ and let $\xg_0$ be the $0$-th grading component of the $|k|$-grading of $\xg$ induced by $P$. Let $(\xg/\xm)_0$ be the projection of $\xg_0$ to $\xg/\xm$ and assume that $M$ acts trivially on the representation $\Lambda^{d}(\xg/\xm)_0^*$, where $d$ is the dimension of $(\xg/\xm)_0$.

Then for all $k = 0, \dotsc, \dim(G/P)$ there is a Poisson transform 
  \begin{align*}
  \Phi_k \colon \Omega^k(G/P, \ce[\chi]) \to \Omega^{k}(G/K).
  \end{align*}
  Moreover, on unweighted forms its image is coclosed.
\end{thm}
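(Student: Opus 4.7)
The plan is to construct the Poisson kernel $\phi_k$ of bidegree $(k, n-k)$ explicitly as an algebraic combination of the two $M$-invariant building blocks furnished by Proposition \ref{prop_invariant_form_general} and Lemma \ref{lem_invariant_form_exterior}, and then to derive the coclosedness of the image directly from Proposition \ref{prop_Poisson_operators}. Concretely, I would fix a nontrivial $\tau \in \Lambda^{d}(\xg/\xm)_0^*$, which is $M$-invariant by hypothesis and has bidegree $(d, 0)$, and define
\[
\phi_k \;:=\; \ast_K\bigl(\tau \wedge (d_P E^*)^{n-k}\bigr).
\]
Since $d_P E^*$ has bidegree $(1, 1)$ by Proposition \ref{prop_invariant_form_general} and $\ast_K$ sends $\Lambda^{p,q}(\xg/\xm)^*$ to $\Lambda^{n+d-p, q}(\xg/\xm)^*$, the form $\phi_k$ has the required bidegree $(k, n-k)$; $M$-invariance is immediate from that of $\tau$, $E^*$, and the $M$-equivariance of $d_P$ and $\ast_K$. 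Nondegeneracy of $d_P E^*$ on each pair $(\xg/\xm)_i \times (\xg/\xm)_{-i}$, again from Proposition \ref{prop_invariant_form_general}, ensures that the wedge power $(d_P E^*)^{n-k}$, and hence $\phi_k$, is nonzero. By the Proposition after the definition of Poisson transforms, this invariant form determines the desired operator $\Phi_k$.

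For the coclosedness of the image on unweighted forms, I would apply Proposition \ref{prop_Poisson_operators} to the untwisted version of $\Phi_k$, which expresses $\delta \circ \Phi_k$ as the Poisson transform with kernel $\delta_K \phi_k = \pm\, \ast_K d_K \ast_K \phi_k$. Using $\ast_K^2 = \pm\,\mathrm{id}$, matters reduce to showing
\[
d_K\bigl(\tau \wedge (d_P E^*)^{n-k}\bigr) = 0.
\]
The Leibniz rule splits this into a term involving $d_K \tau$, which vanishes by Lemma \ref{lem_invariant_form_exterior}, and a term proportional to $\tau \wedge d_K (d_P E^*)^{n-k}$. A further Leibniz expansion reduces the latter to $d_K d_P E^*$, which by the anticommutation $d_K d_P = -d_P d_K$ recalled in the paper equals $-d_P d_K E^*$, and this vanishes since $d_K E^* = 0$ by Proposition \ref{prop_invariant_form_general}. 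Hence $\delta_K \phi_k = 0$, so $\delta \circ \Phi_k$ is the zero operator on unweighted forms, giving coclosedness of the image.

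The main obstacle I anticipate is purely organizational: correctly tracking bidegrees, signs from $\ast_K^2$ and from the formula for $\delta_K$ in Proposition \ref{prop_Poisson_operators}, and $M$-equivariance through the $K$-Hodge star, together with confirming that the specific character $\chi$ of $P$ for which $\Phi_k$ maps $\ce[\chi]$-valued forms into scalar forms on $G/K$ is compatible with the chosen kernel. Once these conventions are fixed, the entire argument reduces to a short computation in the finite-dimensional $M$-representation $\Lambda^*(\xg/\xm)^*$, exactly in the spirit of the general principle stated after the definition of Poisson kernels.
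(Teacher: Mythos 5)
Your construction is exactly the paper's: the kernel $\ast_K(\tau \wedge (d_PE^*)^{n-k})$, bidegree and $M$-invariance checked the same way, and coclosedness via $\delta_K\phi_k=0$ using $d_K\tau=0$, $d_KE^*=0$ and $d_Kd_P=-d_Pd_K$ through Proposition \ref{prop_Poisson_operators}. The only difference is your added remark that nondegeneracy of $d_PE^*$ makes the kernel nonzero, a harmless (and welcome) extra check the paper leaves implicit.
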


\begin{proof}
 From Proposition \ref{prop_invariant_form_general} and Lemma \ref{lem_invariant_form_exterior} we obtain two $M$-invariant forms $E^*$ and $\tau$ on $\xg/\xm$ and we denote the induced $G$-invariant differential forms on $G/M$ with the same symbol. Then we can form the Poisson kernel
\begin{align*}
 \phi_k := \ast_K(\tau \wedge (d_PE^*)^{n-k}),
\end{align*}
which by definition is an invariant form of degree $(k,n-k)$ and thus induces a Poisson transform 
\begin{align*}
\Phi_k \colon \Omega^k(G/P, \ce[\chi]) \to \Omega^{k}(G/K). 
\end{align*}
Moreover, we have seen that the $K$-derivative of $E^*$ and $\tau$ is trivial and thus also $d_K(d_PE^*) = -d_Pd_KE^* = 0$. For the Poisson kernel $\phi_k$ this implies that $\delta_K\phi_k = 0$ for all $k$. 
\end{proof}

In the special case of $G = \operatorname{SO}(n+1,1)$ one can show that $\Phi_k$ is the only Poisson transform (up to a multiple) preserving the degree of the differential forms and therefore has to coincide with the transform constructed by Gaillard in \cite{gaillard}. In this case we can generate the ring of Poisson kernels from $E^*$ by applying differential operators and wedge products. In particular, this shows that we cannot expect any other Poisson kernels in general.

On the other hand, Proposition \ref{prop_invariant_form_general} suggests that the ring of Poisson kernels will have a much more complicated structure for other choices of Lie algebras. Indeed, first we could restrict the derivative $d_PE^*$ to the subspaces $(\xg/\xm)_i \times (\xg/\xm)_{-i}$ and extend them trivially, obtaining a family of $M$-invariant $(1,1)$-forms on $\xg/\xm$, whose number depends on the length of the grading on $\xg$. Moreover, every other central element in $\xg_0\cap \xq$ induces an invariant $1$-form on $\xg/\xm$ in the same way as the grading element. Finally, there might be additional $G$-invariant geometric structures on $G/K$ and $G/P$, which are reflected by the existence of additional $M$-invariant forms on some grading components.

As a final result we show that in the case of the minimal parabolic subgroup we obtain the classical Poisson transform as defined in \cite[ch.II. §3.4]{helgason_GASS}. Let $P$ be the parabolic subgroup of $G$ corresponding to $\xp_0$. In this case, the $|k|$-grading of $\xg$ satisfies $\xp_+ = \xn_0$, $\xg_0 \cap \xq = \xa_0$ and $\xg_0 \cap \xk = \xm_0$. In particular, the group $M$ acts trivially on $(\xg/\xm)_0$, so Lemma \ref{lem_invariant_form_exterior} is applicable. The Poisson kernel $\phi_0$ is an $G$-invariant differential form on $G/M$ of degree $(0,n)$, so we can normalize it so that the fibers of $G/M \to G/K$ have unit volume.

Let $\chi \colon P \to \mr_+$ be any character of $P$. Since $M$ is compact and $N$ is unipotent, they have trivial image under $\chi$. Therefore, the character is determined by its image on the abelian subgroup $A$, so in turn by a linear functional $\lambda \in \xa_0^*$. Explicitly, this correspondence is given by $\chi(p) = e^{\lambda(H(p))}$, where $H(p)$ is the $\xa_0$-part in the Iwasawa decomposition of $p \in P$. In this case, we denote the corresponding density bundle by $\ce[\lambda]$ and call it the \emph{bundle of $\lambda$-densities}.
\begin{cor}
  The Poisson transform $\Phi_0 \colon \Gamma(\ce[\rho - \lambda]) \to C^{\infty}(G/K)$ defined in Theorem \ref{thm_poisson_transform} is the classical Poisson transform.
\end{cor}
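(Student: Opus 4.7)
The plan is to unwind the integral defining $\Phi_0$ in the minimal parabolic setting and perform an Iwasawa change of variables that brings it into the familiar form of the classical Poisson kernel. First I would exploit the normalization of $\phi_0$: since $\phi_0 \in \Omega^{0,n}(G/M)$ is $G$-invariant and, by hypothesis, restricts on each fiber of $G/M \to G/K$ to a $K$-invariant $n$-form of unit total mass, the fiber integral collapses to ordinary integration against the normalized Haar measure on $K/M \cong G/P$. Parametrizing the fiber over $gK$ as $\{gkM \mid k \in K\}$ and writing $f \colon G \to \mr$ for the $P$-equivariant lift of $\sigma \in \Gamma(\ce[\rho-\lambda])$, one obtains
\[
\Phi_0(\sigma)(gK) = \int_K f(gk)\,dk.
\]

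Next I would re-express $f$ in terms of boundary data on $K/M$. Because $M$ is compact and $N$ is unipotent, the character $\chi(p) = e^{(\rho-\lambda)(H(p))}$ is trivial on $MN$, so combining the $P$-equivariance of $f$ with the Iwasawa decomposition gives $f(g) = e^{(\lambda-\rho)(H(g))} F(k(g)M)$, where $F(kM) := f(k) \in C^{\infty}(K/M)$ is the underlying boundary datum. Substituting turns the integrand into $e^{(\lambda-\rho)(H(gk))} F(k(gk)M)$.

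Finally I would change variables via $k \mapsto k' := k(gk)$, which realizes the left $g$-action on $K/M \cong G/P$. A short manipulation of $gk = k'\exp(H(gk))n(gk)$ shows that $g^{-1}k' = k\exp(-H(gk))\tilde n$ for some $\tilde n \in N$, so this expression is already in $KAN$ form and one reads off $H(gk) = -H(g^{-1}k')$. Together with the standard quasi-invariance formula $dk = e^{-2\rho(H(g^{-1}k'))}\,dk'$ for the $G$-action on $K/M$, the exponent collapses to $-(\lambda+\rho)(H(g^{-1}k'))$ and one arrives at
\[
\Phi_0(\sigma)(gK) = \int_K e^{-(\lambda+\rho)(H(g^{-1}k'))} F(k'M)\,dk',
\]
which is exactly the classical Poisson transform of $F$ at parameter $\lambda$ as formulated in \cite[II.3.4]{helgason_GASS}. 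The only nonformal ingredient in the argument is the quasi-invariance formula, a standard consequence of the Iwasawa decomposition of Haar measure on $G$; once it is in hand, the corollary follows by direct comparison.
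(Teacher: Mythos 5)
Your argument is correct and follows essentially the same route as the paper: collapse the fiber integral to the normalized invariant measure on $K/M$ using the normalization and $G$-invariance of $\phi_0$, rewrite the integrand via $P$-equivariance and the Iwasawa decomposition, and then change variables by $k \mapsto k(gk)$ using the quasi-invariance cocycle $e^{-2\rho(H(\cdot))}$ together with the identity $H(gk) = -H(g^{-1}k')$. No gaps to report.
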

\begin{proof}
By definition, the fiber of $\pi_K$ over $gK$ is given by the set $\{gkM \mid k \in K\}$. Thus, applying the transformation formula for the action $\ell_g$ of $G$ on $G/M$ we obtain for all $\sigma \in \Gamma(\ce[\rho - \lambda])$ that
\begin{align*}
 \Phi_0(\sigma)(gK)  &= \int_{\pi_K^{-1}(gK)} \pi_P^*\sigma \ \phi_0 = \int_{K/M} \ell_g^*\pi_P^*\sigma \ \phi_0 = \int_{K/M} (\pi_P^*\sigma)(gkM)\ dkM,
\end{align*}
where $dkM$ is the normalized, $K$-left invariant measure on $K/M$, see \cite[Theorem 1.9]{helgason_GGA}. Using the definition of $\pi_P^*\sigma$ we can replace the above integrand by $e^{(\lambda-\rho)(H(gk))}\pi_P^*\sigma(k(gk)M)$. But the map $T_g \colon k \mapsto k(gk)$ is a diffeomorphism of $K$ with inverse $T_g^{-1} = T_{g^{-1}}$ (see \cite[Lemma 5.19]{helgason_GGA}), and the change of measure is given by $T_g^*dk = e^{-2\rho(H(gk))} dk$. Inserting this into the above formula and using that $H(gk(g^{-1}k)) = -H(g^{-1}k)$ we obtain that
\begin{align*}
 \Phi_0(\sigma)(gK) = \int_{K/M} e^{-(\lambda+\rho)(H(g^{-1}k))} (\pi_P^*\sigma)(kM) dkM,
\end{align*}
which is the classical Poisson transform.
\end{proof}
 \begin{rem}
  The classical Poisson transform is usually defined on smooth functions on $K/M$ rather than on sections of density bundles on $G/P$. However, consider a smooth map $\phi$ on $K/M$ and let $\tilde{\phi} \colon K \to \mr$ be the corresponding $M$-equivariant map defined by $\tilde{\phi}(k) := \phi(kM)$. Then the map $f \colon G \to \mr[\lambda]$ given by $f(g) := e^{\lambda(H(g))}\tilde{\phi}(k(g))$ is $P$-equivariant and therefore corresponds to a $\lambda$-density on $G/P$. Thus, for all $\lambda \in \xa_0^*$ we obtain an isomorphism 
\begin{align*}
 \chi_{\lambda}\colon C^{\infty}(K/M) \to \Gamma(\ce[\lambda])
\end{align*}
which satisfies $(\pi_P^* \circ \chi_{\lambda} \circ f)(kM) = f(kM)$ for all $kM \in K/M$. Using this identification we see that $\Phi_0(\chi_{\rho -\lambda}f)$ coincides with the definition in \cite[II.3.4]{helgason_GASS}. 
 \end{rem}

 \bibliographystyle{amsplain}
\providecommand{\MRhref}[2]{%
  \href{http://www.ams.org/mathscinet-getitem?mr=#1}{#2}
}
\providecommand{\href}[2]{#2}

\address
\end{document}